\numberwithin{equation}{subsection}
\newtheorem{theorem}{Theorem}[section]
\newtheorem*{theorem*}{Theorem}
\newtheorem{lemma}[theorem]{Lemma}
\newtheorem{proposition}[theorem]{Proposition}
\newtheorem{corollary}[theorem]{Corollary}
\newtheorem*{corollary*}{Corollary}
\theoremstyle{remark}
\newtheorem{definition}[theorem]{Definition}
\theoremstyle{remark}
\newtheorem{example}[theorem]{Example}
\theoremstyle{remark}
\newtheorem{remark}[theorem]{Remark}
\theoremstyle{remark}
\newtheorem{notation}[theorem]{Notation}
\newcommand{\too}{\longrightarrow}
\newcommand{\cC}{{\mathcal C}}
\newcommand{\cT}{{\mathcal T}}
\newcommand{\bbA}{\mathbb{A}}
\newcommand{\bbQ}{\mathbb{Q}}
\newcommand{\bbP}{\mathbb{P}}
\newcommand{\bbZ}{\mathbb{Z}}
\newcommand{\ie}{\textsl{i.e.}\ }
\newcommand{\eg}{\textsl{e.g.}}
\newcommand{\internalcomment}[1]{}
\title[Exponentiation of motivic measures]{Exponentiation of motivic measures}
\author{Niranjan Ramachandran and Gon{\c c}alo~Tabuada}
\address{Niranjan Ramachandran, Department of Mathematics, University of Maryland, College Park, MD 20742 USA.}
\email{atma@math.umd.edu}
\urladdr{http://www2.math.umd.edu/~atma/}
\address{Gon{\c c}alo Tabuada, Department of Mathematics, MIT, Cambridge, MA 02139, USA}
\email{tabuada@math.mit.edu}
\urladdr{http://math.mit.edu/~tabuada}
\date{\today}
\begin{document}
\begin{abstract}
In this short note we establish some properties of all those motivic measures which can be exponentiated. As a first  application, we show that the rationality of Kapranov's zeta function is stable under products. As a second application,  we give an elementary proof of a result of Totaro. \end{abstract}
\maketitle

\vskip-\baselineskip
\vskip-\baselineskip
\vskip-\baselineskip
\section{Motivic measures}
Let $k$ be an arbitrary base field and $\mathrm{Var}(k)$ the category of {\em varieties}, \ie reduced separated $k$-schemes of finite type. The {\em Grothendieck ring of varieties $K_0\mathrm{Var}(k)$} is defined as the quotient of the free abelian group on the set of isomorphism classes of varieties $[X]$ by the relations $[X]=[Y]+[X\backslash Y]$, where $Y$ is a closed subvariety of $X$. The multiplication is induced by the product over $\mathrm{Spec}(k)$. When $k$ is of positive characteristic, one needs also to impose the relation $[X]=[Y]$ for every surjective radicial morphism $X \to Y$; see Musta\c{t}\v{a} \cite[Page~78]{mustata}. Let ${\bf L}:=[\bbA^1]$.

The structure of the Grothendieck ring of varieties is quite mysterious; see Poonen \cite{Poonen} for instance. In order to capture some of its flavor several {\em motivic measures}, \ie ring homomorphisms $\mu:K_0\mathrm{Var}(k) \to R$, have been built. Examples include the counting measure $\mu_\#$ (see \cite[Ex.~7.7]{mustata}); the Euler characteristic measure $\chi_c$ (see \cite[Ex.~7.8]{mustata}); the Hodge characteristic measure $\mu_{\mathrm{H}}$ (see \cite[\S4.1]{LS}); the Poincar{\'e} characteristic measure $\mu_{\mathrm{P}}$ with values in $\bbZ[u]$ (see \cite[\S4.1]{LS}); the Larsen-Lunts ``exotic'' measure $\mu_{\mathrm{LL}}$ (see \cite{Larsen}); the Albanese measure $\mu_{\mathrm{Alb}}$ with values in the semigroup ring of isogeny classes of abelian varieties (see \cite[Thm.~7.21]{mustata}); the Gillet-Soul{\'e} measure $\mu_{\mathrm{GS}}$ with values in the Grothendieck ring $K_0(\mathrm{Chow}(k)_\bbQ)$ of Chow motives (see \cite{GS}); and the measure $\mu_{\mathrm{NC}}$ with values in the Grothendieck ring of noncommutative Chow motives (see \cite{CvsNC}). There exist several relations between the  above motivic measures. For example, $\chi_c, \mu_{\mathrm{H}}, \mu_{\mathrm{P}}, \mu_{\mathrm{NC}}$, factor through $\mu_{\mathrm{GS}}$.
\section{Kapranov's zeta function}
As explained in \cite[Prop.~7.27]{mustata}, in the construction of the Grothendieck ring of varieties we can restrict ourselves to quasi-projective varieties. Given a motivic measure $\mu$, Kapranov introduced in \cite{Kapranov} the associated zeta function 
\begin{equation}\label{eq:zeta}
\zeta_\mu(X;t) := \sum_{n=0}^\infty \mu([S^n(X)])t^n \in (1+ R\llbracket t \rrbracket)^\times\,,
\end{equation}
where $S^n(X)$ stands for the $n^{th}$ symmetric product of the quasi-projective variety $X$. In the particular case of the counting measure, \eqref{eq:zeta} agrees with the classical Weil zeta function. Here are some other computations (with $X$ smooth projective)
\begin{eqnarray*}
\zeta_{\chi_c}(X;t)=(1-t)^{-\chi_c(X)} &\!\! \zeta_{\mathrm{P}}(X;t)=\prod_{r\geq 0} (\frac{1}{1- u^r t})^{(-1)^{b_r}} &\!\! \zeta_{\mathrm{Alb}}(X;t) = \frac{[\mathrm{Alb}(X)]t}{1-t}\,,
\end{eqnarray*}
where $b_r:= \mathrm{dim}_{\mathbb C} H^r_{dR}(X)$ and $\mathrm{Alb}(X)$ is the Albanese variety of $X$; see \cite[\S3]{Ramachandran}.
\section{Big Witt ring}
Given a commutative ring $R$, recall from Bloch \cite[Page~192]{Bloch} the construction of the big Witt ring $W(R)$. As an additive group, $W(R)$ is equal to $((1+R\llbracket t \rrbracket)^\times, \times)$. Let us write $+_W$ for the addition in $W(R)$ and $1=1+0t+\cdots$ for the zero element. The multiplication $\ast$ in $W(R)$ is uniquely determined by the following~requirements:
\begin{itemize}
\item[(i)] The equality $(1-at)^{-1} \ast (1-bt)^{-1} = (1-abt)^{-1}$ holds for every $a, b \in R$;
\item[(ii)] The assignment $R \mapsto W(R)$ is an endofunctor of commutative rings.
\end{itemize} 
The unit element is $(1-t)^{-1}$. We have also a (multiplicative) Teichm\"uller map
\begin{eqnarray*}
R \too W(R) && a \mapsto [a]:=(1-at)^{-1}
\end{eqnarray*}
such that $g(t)\ast [a]=g(at)$ for every $a \in R$ and $g(t) \in W(R)$; see \cite[Page~193]{Bloch}. \begin{definition}
Elements of the form $p(t)-_Wq(t) \in W(R)$, with $p(t),q(t) \in R[t]$ and $p(0)=q(0)=1 \in R$, are called {\em rational functions}. 
\end{definition}
Let $W_{\mathrm{rat}}(R)$ be the subset of rational elements. As proved by Naumann in \cite[Prop.~6]{Naumann}, $W_{\mathrm{rat}}(R)$ is a subring of $W(R)$. Moreover, $R\mapsto W_{\mathrm{rat}}(R)$ is an endofunctor of commutative rings. Recall also the construction of the commutative ring $\Lambda(R)$. As an additive group, $\Lambda(R)$ is equal to $W(R)$. The multiplication is uniquely determined by the requirement that the involution group isomorphism $\iota: \Lambda(R) \to W(R), g(t) \mapsto g(-t)^{-1}$, is a ring isomorphism. The unit element is~$1+t$.
\section{Exponentiation}
Let $\mu$ be a motivic measure. As explained by  Musta\c{t}\v{a} in \cite[Prop. 7.28]{mustata}, the assignment $X \mapsto \zeta_\mu(X;t)$ gives rise to a group homomorphism
\begin{equation}\label{zeta-Kapranov}
\zeta_\mu(-;t): K_0\mathrm{Var}(k) \too W(R)\,.
\end{equation}
\begin{definition}{(\cite[\S3]{Ramachandran})}\label{def:expo} A motivic measure $\mu$ {\em can be exponentiated}\footnote{Note that Kapranov's zeta function is similar to the exponential function $e^x = \sum_{n=0}^{\infty} \frac{x^n}{n!}$. The product $X^n$ corresponds to $x^n$ and the symmetric product $S^n(X)$ corresponds to $\frac{x^n}{n!}$ since $n!$ is the size of the symmetric group on $n$ letters.} if the above group homomorphism \eqref{zeta-Kapranov} is a ring homomorphism. 
\end{definition}
\begin{corollary}\label{cor:main}
Given a motivic measure $\mu$ as in Definition \ref{def:expo}, the following~holds:
\begin{itemize}
\item[(i)] The ring homomorphism \eqref{zeta-Kapranov} is a new motivic measure;
\item[(ii)] Any motivic measure which factors through $\mu$ can also be exponentiated.
\end{itemize}
\end{corollary}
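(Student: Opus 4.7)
The plan is to deduce both assertions directly from Definition \ref{def:expo} together with the functoriality of the big Witt ring construction.

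For item (i), I would simply observe that the claim is essentially tautological: a motivic measure is, by definition, any ring homomorphism out of $K_0\mathrm{Var}(k)$ whose target is a commutative ring. Since $W(R)$ is a commutative ring and since, by the exponentiation hypothesis on $\mu$, the map $\zeta_\mu(-;t)$ is a ring homomorphism, there is nothing left to verify.

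For item (ii), suppose $\nu \colon K_0\mathrm{Var}(k) \to R'$ factors through $\mu$ as $\nu = \phi \circ \mu$ for some ring homomorphism $\phi \colon R \to R'$. Applying the endofunctor $W(-)$ of commutative rings, I obtain a ring homomorphism $W(\phi) \colon W(R) \to W(R')$. The decisive point I would verify is that $W(\phi)$ acts coefficient-wise on the underlying power series, sending $1 + \sum_{n \geq 1} a_n t^n$ to $1 + \sum_{n \geq 1} \phi(a_n) t^n$; this follows from the defining requirements (i)--(ii) of $W$ together with the identity $W(\phi)[a] = [\phi(a)]$ on Teichm\"uller elements. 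Granted this, for each quasi-projective variety $X$ one has
\[
W(\phi)\bigl(\zeta_\mu(X;t)\bigr) = \sum_{n \geq 0} \phi\bigl(\mu([S^n(X)])\bigr)\, t^n = \sum_{n \geq 0} \nu([S^n(X)])\, t^n = \zeta_\nu(X;t),
\]
so that $\zeta_\nu(-;t) = W(\phi) \circ \zeta_\mu(-;t)$ is a composition of ring homomorphisms, and hence itself a ring homomorphism. By Definition \ref{def:expo}, this means that $\nu$ can be exponentiated.

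The only mildly non-formal step is the identification of $W(\phi)$ with the coefficient-wise application $\phi_*$; once this is in hand, both items reduce to purely formal manipulations, and no further input on the structure of $K_0\mathrm{Var}(k)$ or on the particular form of Kapranov's zeta function is required.
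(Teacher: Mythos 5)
The paper states this corollary without any proof, treating both items as immediate from Definition~\ref{def:expo}, so there is no alternative argument in the paper to compare against. Your proof correctly fills in what the authors left implicit: item~(i) is indeed a tautology given the definition of a motivic measure and the hypothesis that \eqref{zeta-Kapranov} is a ring homomorphism, and item~(ii) is exactly the composition $\zeta_\nu(-;t) = W(\phi)\circ \zeta_\mu(-;t)$, which is a ring homomorphism by functoriality of $W$. The one point you single out as ``mildly non-formal'' --- that $W(\phi)$ acts coefficient-wise --- is in fact the very definition of $W$ as a functor on the underlying additive groups $(1 + tR\llbracket t\rrbracket, \times)$; requirement~(ii) in the paper's description of $W$ then guarantees this coefficient-wise map respects~$\ast$, so the identification is immediate rather than something needing a Teichm\"uller-element argument. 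Overall this is the expected proof and is correct.
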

This class of motivic measures is well-behaved with respect with rationality:
\begin{proposition}\label{prop:product}
Let $\mu$ be a motivic measure as in Definition \ref{def:expo}. If $\zeta_\mu(X;t)$ and $\zeta_\mu(Y;t)$ are rational functions, then $\zeta_\mu(X\times Y;t)$ is also a rational function.
\end{proposition}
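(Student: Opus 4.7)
The plan is to reduce the statement to the ring-theoretic fact that $W_{\mathrm{rat}}(R)$ is a \emph{subring} of $W(R)$, which was already cited from Naumann \cite{Naumann} in the excerpt. The two ingredients needed are (a) the multiplicativity of $\zeta_\mu(-;t)$ with respect to the product of varieties, which is exactly the content of Definition \ref{def:expo}, and (b) the closure of rational elements under the Witt product $\ast$.

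More concretely, I would first invoke Definition \ref{def:expo}: since $\mu$ can be exponentiated, the map $\zeta_\mu(-;t)\colon K_0\mathrm{Var}(k) \to W(R)$ is a ring homomorphism. Applying this ring homomorphism to the identity $[X \times Y] = [X] \cdot [Y]$ in $K_0\mathrm{Var}(k)$ (which holds because the product is induced by the fiber product over $\mathrm{Spec}(k)$) yields the key identity
\begin{equation*}
\zeta_\mu(X \times Y; t) \;=\; \zeta_\mu(X;t) \,\ast\, \zeta_\mu(Y;t)
\end{equation*}
inside $W(R)$, where $\ast$ denotes the Witt multiplication.

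Next, I would invoke Naumann's theorem \cite[Prop.~6]{Naumann}, recalled in the excerpt, which asserts that $W_{\mathrm{rat}}(R) \subset W(R)$ is a subring. By hypothesis, both $\zeta_\mu(X;t)$ and $\zeta_\mu(Y;t)$ lie in $W_{\mathrm{rat}}(R)$; hence so does their $\ast$-product. Combining this with the displayed identity shows that $\zeta_\mu(X\times Y;t) \in W_{\mathrm{rat}}(R)$, \ie it is a rational function.

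The proof is therefore essentially a one-line consequence of the combination of Definition \ref{def:expo} and Naumann's result, and there is no real obstacle; the conceptual content is already packaged into those two inputs. The only thing worth emphasizing in the write-up is the precise role played by the exponentiation hypothesis: without it, $\zeta_\mu(-;t)$ is merely a group homomorphism \eqref{zeta-Kapranov}, and $\zeta_\mu(X \times Y; t)$ would bear no direct relation to $\zeta_\mu(X;t) \ast \zeta_\mu(Y;t)$, so one could not appeal to the subring property of $W_{\mathrm{rat}}(R)$ at all.
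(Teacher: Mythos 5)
Your proof is correct and follows exactly the same route as the paper: the paper's one-line proof (``It follows automatically from the fact that $W_{\mathrm{rat}}(R)$ is a subring of $W(R)$'') implicitly relies on the same two ingredients you spell out, namely that exponentiation turns $\zeta_\mu(-;t)$ into a ring homomorphism sending $[X][Y]$ to $\zeta_\mu(X;t)\ast\zeta_\mu(Y;t)$, and Naumann's closure of $W_{\mathrm{rat}}(R)$ under $\ast$.
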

\begin{proof}
It follows automatically from the fact that $W_{\mathrm{rat}}(R)$ is a subring of $W(R)$.
\end{proof}
As proved by Naumann in \cite[Prop.~8]{Naumann} (see also \cite[Thm.~2.1]{Ramachandran}), the counting measure $\mu_\#$ can be exponentiated. On the other hand, Larsen-Lunts ``exotic'' measure $\mu_{\mathrm{LL}}$ {\em cannot} be exponentiated! This would imply, in particular, that
\begin{equation}\label{eq:product-curves}
\zeta_{\mu_{\mathrm{LL}}}(C_1\times C_2;t) = \zeta_{\mu_{\mathrm{LL}}}(C_1;t) \ast \zeta_{\mu_{\mathrm{LL}}}(C_2;t)
\end{equation} 
for any two smooth projective curves $C_1$ and $C_2$. As proved by Kapranov in \cite{Kapranov} (see also \cite[Thm.~7.33]{mustata}), $\zeta_\mu(C;t)$ is a rational function for every smooth projective curve $C$ and motivic measure $\mu$. Using Proposition \ref{prop:product}, this hence implies that the right-hand side of \eqref{eq:product-curves} is also a rational function. On the other hand, as proved by Larsen-Lunts in \cite[Thm.~7.6]{Larsen}, the left-hand side of \eqref{eq:product-curves} is not a rational function whenever $C_1$ and $C_2$ have positive genus. We hence obtain a contradiction. 

At this point, it is natural to ask which motivic measures can be exponentiated? We now provide a general answer to this question using the notion of $\lambda$-ring. Recall that a {\em $\lambda$-ring $R$} consists of a commutative ring equipped with a sequence of maps $\lambda^n:A \to A, n \geq 0$, such that $\lambda^0(a)=1$, $\lambda^1(a)=a$, and $\lambda^n(a+b)=\sum_{i +j =n} \lambda^i(a) \lambda^j(b)$ for every $a, b \in R$. In other words, the map
\begin{eqnarray*}
\lambda_t: R \too \Lambda(R) && a \mapsto \lambda_t(a):= \sum_n \lambda^n(a) t^n
\end{eqnarray*}
is a group homomorphism. Equivalently, the composed map 
\begin{eqnarray}\label{eq:lambda-1}
\sigma_t: R \xrightarrow{\lambda_t} \Lambda(R) \xrightarrow{\iota} W(R) && a \mapsto \sigma_t(a):=\lambda_{-t}(a)^{-1}
\end{eqnarray}
is a group homomorphism. This homomorphism is called the {\em opposite} $\lambda$-structure.
\begin{proposition}\label{prop:expo}
Let $\mu$ be a motivic measure and $R$ a $\lambda$-ring such that:
\begin{itemize}
\item[(i)] The above group homomorphism \eqref{eq:lambda-1} is a ring homomorphism;
\item[(ii)] We have $\mu([S^n(X)])=\sigma^n(\mu([X]))$ for every quasi-projective variety $X$.
\end{itemize}
Under these conditions, the motivic measure $\mu$ can be exponentiated.
\end{proposition}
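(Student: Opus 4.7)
The plan is to show that the group homomorphism $\zeta_\mu(-;t)$ of \eqref{zeta-Kapranov} coincides with the composition $\sigma_t\circ \mu$, and that this composition is a ring homomorphism under hypotheses (i) and (ii).

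First I would unpack hypothesis (ii). By definition, for a quasi-projective variety $X$,
\begin{equation*}
\zeta_\mu(X;t) = \sum_{n=0}^\infty \mu([S^n(X)])\, t^n = \sum_{n=0}^\infty \sigma^n(\mu([X]))\, t^n = \sigma_t(\mu([X]))\,,
\end{equation*}
so the two maps $\zeta_\mu(-;t)$ and $\sigma_t\circ \mu$ from $K_0\mathrm{Var}(k)$ to $W(R)$ agree on the classes $[X]$. As recalled in \cite[Prop.~7.27]{mustata}, these classes generate $K_0\mathrm{Var}(k)$ as an abelian group. Both $\zeta_\mu(-;t)$ (by \cite[Prop.~7.28]{mustata}) and $\sigma_t\circ \mu$ (being a composition of the group homomorphism $\sigma_t$ with the ring homomorphism $\mu$) are group homomorphisms, hence equal.

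Next I would invoke hypothesis (i): $\sigma_t\colon R\to W(R)$ is a ring homomorphism. Since $\mu\colon K_0\mathrm{Var}(k)\to R$ is a ring homomorphism by construction, the composition $\sigma_t\circ \mu\colon K_0\mathrm{Var}(k)\to W(R)$ is also a ring homomorphism. Combined with the identification above, this shows $\zeta_\mu(-;t)=\sigma_t\circ \mu$ is a ring homomorphism, which is the content of Definition \ref{def:expo}.

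There is no substantial obstacle here: once hypothesis (ii) is rewritten as the pointwise equality $\zeta_\mu(X;t)=\sigma_t(\mu([X]))$, the standard principle that two group homomorphisms out of $K_0\mathrm{Var}(k)$ agreeing on the generators $[X]$ must be equal reduces the multiplicativity of $\zeta_\mu(-;t)$ to the multiplicativity of $\sigma_t$, which is exactly hypothesis (i). The only mild point to keep in mind is that multiplicativity of $\zeta_\mu(-;t)$ is a statement about arbitrary elements of $K_0\mathrm{Var}(k)$ (not just classes of varieties), which is precisely why the passage through the factorization $\zeta_\mu(-;t)=\sigma_t\circ\mu$ on all of $K_0\mathrm{Var}(k)$ is needed rather than a direct check on products $[X\times Y]=[X]\cdot[Y]$.
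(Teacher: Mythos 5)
Your proof is correct and follows essentially the same argument as the paper: factor $\zeta_\mu(-;t)$ as $\sigma_t\circ\mu$ using hypothesis (ii), then invoke hypothesis (i) together with the fact that $\mu$ is a ring homomorphism to conclude. Your version simply spells out the implicit step that two group homomorphisms out of $K_0\mathrm{Var}(k)$ agreeing on the generators $[X]$ must agree everywhere, which the paper leaves to the reader.
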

\begin{proof}
Consider the following composed ring homomorphism
\begin{equation}\label{eq:comp}
K_0\mathrm{Var}(k)\stackrel{\mu}{\too} R \stackrel{\sigma_t}{\too} W(R)\,.
\end{equation}
The equalities $\mu([S^n(X)])=\sigma^n(\mu([X]))$ allow us to conclude that \eqref{eq:comp} agrees with the group homomorphism $\zeta_\mu(-;t)$. This achieves the proof.
\end{proof}
\begin{remark}\label{rk:1}
Let $\cC$ be a $\bbQ$-linear additive idempotent complete symmetric monoidal category. As proved by Heinloth in \cite[Lem.~4.1]{heinloth}, the exterior powers give rise to a special $\lambda$-structure on the Grothendieck ring $K_0(\cC)$, with opposite $\lambda$-structure given by the symmetric powers $\mathrm{Sym}^n$. In this case, \eqref{eq:lambda-1} is a ring homomorphism.
\end{remark}
\begin{remark}\label{rk:2}
Let $\cT'$ be a $\bbQ$-linear thick triangulated monoidal subcategory of compact objects in the homotopy category $\cT=\mathrm{Ho}(\cC)$ of a simplicial symmetric monoidal model category $\cC$. As proved by Guletskii in \cite[Thm.~1]{Guletskii}, the exterior powers give rise to a special $\lambda$-structure on $K_0(\cT')$, with opposite $\lambda$-structure given by the symmetric powers $\mathrm{Sym}^n$. In the case, \eqref{eq:lambda-1} is a ring homomorphism.
\end{remark}


\begin{remark}\label{rk:3}
Assume that $k$ is of characteristic zero. Thanks to Heinloth's presentation of the Grothendieck group of varieties (see \cite[Thm.~3.1]{Bittner}), it suffices to verify the equality $\mu([S^n(X)])=\sigma^n(\mu([X]))$ for every smooth projective variety $X$.
\end{remark}
As an application of the above Proposition \ref{prop:expo}, we obtain the following result:
\begin{proposition}\label{prop:GS}
The Gillet-Soul{\'e} motivic measure $\mu_{\mathrm{GS}}$ can be exponentiated.
\end{proposition}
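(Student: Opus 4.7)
The plan is to verify the two hypotheses of Proposition \ref{prop:expo} for $R=K_0(\mathrm{Chow}(k)_\bbQ)$. Condition (i) is essentially free: since $\mathrm{Chow}(k)_\bbQ$ is a $\bbQ$-linear additive idempotent complete symmetric monoidal category, Remark \ref{rk:1} endows $K_0(\mathrm{Chow}(k)_\bbQ)$ with a special $\lambda$-structure whose opposite $\lambda$-operations are the symmetric powers $\mathrm{Sym}^n$, and under which the map $\sigma_t$ is a ring homomorphism.

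The substantive step is condition (ii), namely the equality
\begin{equation*}
\mu_{\mathrm{GS}}([S^n(X)])=\mathrm{Sym}^n(\mu_{\mathrm{GS}}([X]))
\end{equation*}
for every quasi-projective variety $X$. Assuming $\mathrm{char}(k)=0$, Remark \ref{rk:3} reduces this to the case where $X$ is smooth projective. There, $\mu_{\mathrm{GS}}([X])=[h(X)]$ is the class of the usual Chow motive of $X$, and the identity amounts to the classical isomorphism $h(S^n(X))\simeq \mathrm{Sym}^n(h(X))$ in $\mathrm{Chow}(k)_\bbQ$. One obtains it by descending the symmetrizing projector $\frac{1}{n!}\sum_{\sigma\in \Sigma_n}\sigma$ along the quotient $X^n \to S^n(X)$, using $h(X)^{\otimes n}\simeq h(X^n)$; a careful treatment is given by del Ba\~no--Navarro-Aznar.

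Putting the two pieces together, Proposition \ref{prop:expo} applies and yields that $\mu_{\mathrm{GS}}$ can be exponentiated. The main obstacle is really the motivic Kü{n}neth-type identification in (ii); once it is in hand, the rest of the argument is a direct invocation of the machinery already set up. In positive characteristic, where Bittner's presentation is not available, one would need to replace Remark \ref{rk:3} by a direct argument using the extension of $\mu_{\mathrm{GS}}$ via de Jong alterations, but this lies outside the main line of the proof.
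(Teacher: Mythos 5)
Your proof follows the paper's argument exactly: both verify condition (i) of Proposition~\ref{prop:expo} via Remark~\ref{rk:1}, and condition (ii) via the del~Ba\~no--Navarro-Aznar isomorphism $\mathfrak{h}(S^n(X))\simeq \mathrm{Sym}^n\mathfrak{h}(X)$ for $X$ smooth projective combined with Remark~\ref{rk:3}. Your parenthetical observation that Remark~\ref{rk:3} (hence the reduction to smooth projective varieties) requires characteristic zero is a fair and correct caveat that the paper's proof relies on implicitly as well.
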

\begin{proof}
Recall from \cite{GS} that $\mu_{\mathrm{GS}}$ is induced by the symmetric monoidal functor 
\begin{equation}\label{eq:functor-h}
\mathfrak{h}: \mathrm{SmProj}(k) \too \mathrm{Chow}(k)_\bbQ
\end{equation}
from the category of smooth projective varieties to the category of Chow motives. Since the latter category is $\bbQ$-linear, additive, idempotent complete, and symmetric monoidal, Remark \ref{rk:1} implies that the Grothendieck ring $K_0(\mathrm{Chow}(k)_\bbQ)$ satisfies condition (i) of Proposition \ref{prop:expo}. As proved by del Ba\~no-Aznar in \cite[Cor.~2.4]{Del}, we have $\mathfrak{h}(S^n(X))\simeq \mathrm{Sym}^n \mathfrak{h}(X)$ for every smooth projective variety $X$. Using Remark \ref{rk:3}, this hence implies that condition (ii) of Proposition \ref{prop:expo} is also satisfied.
\end{proof}
\begin{remark}
Thanks to Corollary \ref{cor:main}(ii), all the motivic measures which factor through $\mu_{\mathrm{GS}}$ (\eg\ $\chi_c, \mu_{\mathrm{H}}, \mu_{\mathrm{P}},\mu_{\mathrm{NC}}$) can also be exponentiated.
\end{remark}
\section{Application I: rationality of zeta functions}
By combining Propositions \ref{prop:product} and \ref{prop:GS}, we obtain the following result:
\begin{corollary}\label{cor:GS}
Let $X, Y$ be two varieties. If $\zeta_{\mu_{\mathrm{GS}}}(X;t)$ and $\zeta_{\mu_{\mathrm{GS}}}(Y;t)$ are rational functions, then $\zeta_{\mu_{\mathrm{GS}}}(X\times Y;t)$ is also a rational function.
\end{corollary}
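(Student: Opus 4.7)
The plan is to combine the two preceding results directly, taking $\mu = \mu_{\mathrm{GS}}$. By Proposition \ref{prop:GS} the Gillet-Soul{\'e} motivic measure can be exponentiated in the sense of Definition \ref{def:expo}, so the hypothesis of Proposition \ref{prop:product} is satisfied for $\mu_{\mathrm{GS}}$ and the conclusion is immediate.

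To spell out the mechanism: exponentiability of $\mu_{\mathrm{GS}}$ means that the map
\[
\zeta_{\mu_{\mathrm{GS}}}(-;t): K_0\mathrm{Var}(k) \too W(K_0(\mathrm{Chow}(k)_\bbQ))
\]
is a ring homomorphism. Since multiplication on $K_0\mathrm{Var}(k)$ is induced by the product over $\mathrm{Spec}(k)$, this yields
\[
\zeta_{\mu_{\mathrm{GS}}}(X \times Y; t) \;=\; \zeta_{\mu_{\mathrm{GS}}}(X; t) \ast \zeta_{\mu_{\mathrm{GS}}}(Y; t)
\]
in $W(K_0(\mathrm{Chow}(k)_\bbQ))$. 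By assumption both factors on the right already lie in the subset $W_{\mathrm{rat}}(K_0(\mathrm{Chow}(k)_\bbQ))$ of rational functions, and by Naumann's result recalled in Section~3 this subset is a \emph{subring} of $W(-)$ under $\ast$. Hence the product is rational, which is exactly the statement that $\zeta_{\mu_{\mathrm{GS}}}(X\times Y;t)$ is rational.

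There is no real obstacle here, since all nontrivial inputs have been assembled upstream: Heinloth's construction of the $\lambda$-structure on $K_0(\mathrm{Chow}(k)_\bbQ)$ (Remark~\ref{rk:1}), the del Ba\~no--Aznar identification $\mathfrak{h}(S^n X)\simeq \mathrm{Sym}^n \mathfrak{h}(X)$ used in Proposition~\ref{prop:GS}, and Naumann's subring property of $W_{\mathrm{rat}}$. The corollary is the formal specialisation of Proposition~\ref{prop:product}, which was designed precisely to accept any exponentiable motivic measure as input.
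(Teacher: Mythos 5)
Your proof is correct and follows exactly the same route as the paper: specialize Proposition~\ref{prop:product} to $\mu = \mu_{\mathrm{GS}}$, which is admissible because Proposition~\ref{prop:GS} shows $\mu_{\mathrm{GS}}$ can be exponentiated. The extra unwinding of the mechanism (ring-homomorphism property of $\zeta_{\mu_{\mathrm{GS}}}(-;t)$ plus Naumann's subring $W_{\mathrm{rat}}$) is a faithful expansion of the one-line proofs of Propositions~\ref{prop:product} and~\ref{prop:GS} rather than a different argument.
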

\begin{remark}
Corollary \ref{cor:GS} was independently obtained by Heinloth \cite[Prop.~6.1]{heinloth} in the particular case of smooth projective varieties and under the extra assumption that $\zeta_{\mu_{\mathrm{GS}}}(X;t)$ and $\zeta_{\mu_{\mathrm{GS}}}(Y;t)$ satisfy a certain functional equation.
\end{remark}
\begin{example}\label{ex:1}
Let $X, Y$ be smooth projective varieties (\eg\ abelian varieties) for which $\mathfrak{h}(X),\mathfrak{h}(Y)$ are Kimura-finite; see \cite[\S3]{Kimura}. Consider the ring homomorphism
\begin{equation}\label{eq:ring-1}
\sigma_t: K_0(\mathrm{Chow}(k)_\bbQ)\too W(K_0(\mathrm{Chow}(k)_\bbQ))\,.
\end{equation}
As proved by Andr{\'e} in \cite[Prop.~4.6]{Andre}, $\sigma_t([\mathfrak{h}(X)])$ and $\sigma_t([\mathfrak{h}(Y)])$ are rational functions. Since $\zeta_{\mu_{\mathrm{GS}}}(-;t)$ agrees with the composition of $\mu_{\mathrm{GS}}$ with $\eqref{eq:ring-1}$, these latter functions are equal to $\zeta_{\mu_{\mathrm{GS}}}(X;t)$ and $\zeta_{\mu_{\mathrm{GS}}}(Y;t)$, respectively. Using Corollary \ref{cor:GS}, we hence conclude that $\zeta_{\mu_{\mathrm{GS}}}(X\times Y;t)$ is also a rational function. 
\end{example}
Recall from Voevodsky \cite[\S2.2]{Voevodsky} the construction of the functor 
\begin{equation}\label{eq:functor-mix}
M^c: \mathrm{Var}(k)^p \too \mathrm{DM}_{\mathrm{gm}}(k)_\bbQ
\end{equation}
from the category of varieties and proper morphisms to the triangulated category of geometric motives. As proved in \cite[Prop.~4.1.7]{Voevodsky}, the functor \eqref{eq:functor-mix} is symmetric monoidal. Moreover, given a variety $X$ and a closed subvariety $Y \subset X$, we have
$$ M^c(Y) \too M^c(X) \too M^c(X\backslash Y) \too M^c(Y)[1]\,;$$
see \cite[Prop.~4.1.5]{Voevodsky}. Consequently, we obtain the following motivic measure 
\begin{eqnarray}\label{eq:measure-mix}
K_0\mathrm{Var}(k) \too K_0(\mathrm{DM}_{\mathrm{gm}}(k)_\bbQ) && [X] \mapsto [M^c(X)]\,.
\end{eqnarray}
\begin{proposition}\label{prop:extension}
The above motivic measure \eqref{eq:measure-mix} agrees with $\mu_{\mathrm{GS}}$.
\end{proposition}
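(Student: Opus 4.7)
The plan is to compare both measures on a generating set of $K_0\mathrm{Var}(k)$, namely the classes of smooth projective varieties, where both sides can be computed directly. The key external input is Voevodsky's fully faithful symmetric monoidal embedding $\mathrm{Chow}(k)_\bbQ \hookrightarrow \mathrm{DM}_{\mathrm{gm}}(k)_\bbQ$ (see \cite[\S4.2]{Voevodsky}), which identifies the Chow motive $\mathfrak{h}(X)$ of a smooth projective variety $X$ with its geometric motive $M(X)$. This embedding induces a ring homomorphism
\[
\iota: K_0(\mathrm{Chow}(k)_\bbQ) \too K_0(\mathrm{DM}_{\mathrm{gm}}(k)_\bbQ),
\]
so that the content of the proposition is the equality $\iota \circ \mu_{\mathrm{GS}} = [M^c(-)]$ of two motivic measures $K_0\mathrm{Var}(k) \to K_0(\mathrm{DM}_{\mathrm{gm}}(k)_\bbQ)$.

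Second, for a smooth projective variety $X$, properness implies $M^c(X) \simeq M(X)$ by construction of $M^c$ (see \cite[\S4.1]{Voevodsky}). Via $\iota$, the class $[M(X)]$ is the image of $[\mathfrak{h}(X)] = \mu_{\mathrm{GS}}([X])$. Hence both measures send $[X]$ to the same element of $K_0(\mathrm{DM}_{\mathrm{gm}}(k)_\bbQ)$ when $X$ is smooth projective.

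Third, to conclude, I would invoke the Bittner/Heinloth presentation (as already used in Remark \ref{rk:3}, assuming $k$ of characteristic zero): $K_0\mathrm{Var}(k)$ is generated as a group by classes of smooth projective varieties, so any two group (in particular ring) homomorphisms out of $K_0\mathrm{Var}(k)$ that coincide on smooth projective varieties must be equal. This forces $\iota\circ\mu_{\mathrm{GS}} = [M^c(-)]$.

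The delicate point, which is the main obstacle to verify carefully, is ensuring that Voevodsky's embedding of $\mathrm{Chow}(k)_\bbQ$ into $\mathrm{DM}_{\mathrm{gm}}(k)_\bbQ$ really does send $\mathfrak{h}(X)$ to $M(X)$ without any twist, shift, or passage to opposite categories — this is the content of Voevodsky's comparison theorem but needs to be checked against the precise normalizations used in defining $\mu_{\mathrm{GS}}$ and the functor $M^c$. Once this identification is in place, the rest of the argument is a straightforward generation-by-smooth-projective-varieties reduction, identical in spirit to the proof of Proposition \ref{prop:GS}.
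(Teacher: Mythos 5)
Your proof follows the same outline as the paper's — compare the two measures on classes of smooth projective varieties via Voevodsky's embedding of $\mathrm{Chow}(k)_\bbQ$ into $\mathrm{DM}_{\mathrm{gm}}(k)_\bbQ$, then conclude by the Bittner/Heinloth presentation of $K_0\mathrm{Var}(k)$ — but it omits one ingredient that the paper treats as essential. You only record that the embedding induces a ring \emph{homomorphism} $\iota\colon K_0(\mathrm{Chow}(k)_\bbQ) \to K_0(\mathrm{DM}_{\mathrm{gm}}(k)_\bbQ)$, and accordingly you reinterpret the proposition as the equality $\iota\circ\mu_{\mathrm{GS}}=[M^c(-)]$. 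The paper instead invokes Bondarko's theorem that $\iota$ is a ring \emph{isomorphism}. This is not automatic: $K_0$ of an additive category (split exact sequences) and $K_0$ of a triangulated category (exact triangles) are defined by different relations, and a fully faithful embedding of the former into the latter need not induce an injection on $K_0$, let alone a bijection; Bondarko's proof relies on the weight structure on $\mathrm{DM}_{\mathrm{gm}}(k)_\bbQ$ with heart $\mathrm{Chow}(k)_\bbQ$.

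The distinction matters beyond pedantry. The statement ``agrees with $\mu_{\mathrm{GS}}$'' presupposes a canonical identification of the target rings, and the downstream applications genuinely use it: in Example \ref{ex:2} and Proposition \ref{prop:others}(ii) one computes rationality of $\sigma_t([M^c(X)])$ inside $W(K_0(\mathrm{DM}_{\mathrm{gm}}(k)_\bbQ))$ and then transfers this to rationality of $\zeta_{\mu_{\mathrm{GS}}}(X;t)$ inside $W(K_0(\mathrm{Chow}(k)_\bbQ))$. If $\iota$ were merely a (non-surjective) ring map, an element of $W(K_0(\mathrm{Chow}(k)_\bbQ))$ could become rational after applying $W(\iota)$ without having been rational to begin with, because the witnessing polynomials $p(t),q(t)$ need not have coefficients in the image of $\iota$. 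So you should add a step citing Bondarko (as the paper does) to upgrade $\iota$ to an isomorphism; with that in place, the rest of your argument is correct and matches the paper.
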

\begin{proof}
As proved by Voevodsky in \cite[Prop.~2.1.4]{Voevodsky}, there exists a $\bbQ$-linear additive fully-faithful symmetric monoidal functor
\begin{equation}\label{eq:functor}
\mathrm{Chow}(k)_\bbQ \too \mathrm{DM}_{\mathrm{gm}}(k)_\bbQ
\end{equation}
such that $\eqref{eq:functor}\circ \mathfrak{h}(X) \simeq M^c(X)$ for every smooth projective variety. Thanks to the work of Bondarko \cite[Cor.~6.4.3 and Rk.~6.4.4]{Bondarko-1}, the above functor \eqref{eq:functor} induces a ring isomorphism $K_0(\mathrm{Chow}(k)_\bbQ)\simeq K_0(\mathrm{DM}_{\mathrm{gm}}(k)_\bbQ)$. Therefore, the proof follows from Heinloth's presentation of the Grothendieck ring of varieties in terms of smooth projective varieties; see \cite[Thm.~3.1]{Bittner}.
\end{proof}
Thanks to Proposition \ref{prop:extension}, Example \ref{ex:1} admits the following generalization:
\begin{example}\label{ex:2}
Let $X,Y$ be varieties for which $M^c(X),M^c(Y)$ are Kimura-finite. Similarly to Example \ref{ex:1}, $\zeta_{\mu_{\mathrm{GS}}}(X\times Y;t)$ is then a rational function.
\end{example}
In the above Examples \ref{ex:1} and \ref{ex:2}, the rationality of $\zeta_{\mu_{\mathrm{GS}}}(X\times Y;t)$ can alternatively be deduced from the stability of Kimura-finiteness under tensor products; see \cite[\S5]{Kimura}. Thanks to the work of O'Sullivan-Mazza \cite[\S5.1]{Mazza} and Guletskii \cite{Guletskii}, the above Corollary \ref{cor:GS} can also be applied to non Kimura-finite situations.
\begin{proposition}\label{prop:others}
Let $X_0$ be a connected smooth projective surface over an algebraically closed field $k_0$ such that $q=0$ and $p_g >0$, $k:=k_0(X_0)$ the function field of $X_0$, $x_0$ a $k_0$-point of $X_0$, $z$ the zero-cycle which is the pull-back of the cycle $\Delta(X_0)-(x_0\times X)$ along $X_0 \times k \to X_0 \times X_0$, $Z$ the support of $z$, and finally $U$ the complement of $Z$ in $X=X_0 \times k$. Under these notations, the following holds:
\begin{itemize}
\item[(i)] The geometric motive $M^c(U)$ is not Kimura-finite;
\item[(ii)] Kapranov's zeta function $\zeta_{\mu_{\mathrm{GS}}}(U;t)$ is rational.
\end{itemize}
\end{proposition}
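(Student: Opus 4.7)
The plan is to handle the two assertions separately: quote the construction of O'Sullivan--Mazza for (i), and reduce (ii) via the scissor relation to Guletskii's rationality theorem for surfaces with $q=0$.

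For part (i), the cycle $\Delta(X_0)-(x_0\times X_0)$ on $X_0\times X_0$ captures the ``variable part'' of the diagonal of $X_0$. Pulling it back along $X_0\times k\to X_0\times X_0$ yields a $0$-cycle $z=[P_\eta]-[P_{x_0}]$ on $X=X_0\times k$, where $P_\eta$ is the generic point of $X_0$ regarded as a $k$-rational point of $X$ and $P_{x_0}$ is the base change of $x_0$. By Mumford's theorem, the hypothesis $p_g>0$ forces $[P_\eta]\neq[P_{x_0}]$ in $\mathrm{CH}_0(X)$, and excising $Z=\{P_\eta,P_{x_0}\}$ isolates this obstruction inside $M^c(U)$. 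O'Sullivan--Mazza \cite[\S5.1]{Mazza} then show, by analysis of the symmetric powers of $M^c(U)$, that no Kimura decomposition can exist.

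For part (ii), first recall that flat base change preserves coherent cohomology, so $X=X_0\times_{k_0}k$ remains a smooth projective surface with $q(X)=0$; Guletskii's rationality theorem \cite{Guletskii} therefore places $\zeta_{\mu_{\mathrm{GS}}}(X;t)$ in $W_{\mathrm{rat}}(K_0(\mathrm{Chow}(k)_\bbQ))$. Next, $Z\simeq \mathrm{Spec}(k)\sqcup \mathrm{Spec}(k)$ is $0$-dimensional, so a direct computation gives $\zeta_{\mu_{\mathrm{GS}}}(Z;t)=(1-t)^{-2}\in W_{\mathrm{rat}}$. Finally, the scissor relation $[U]=[X]-[Z]$ in $K_0\mathrm{Var}(k)$, combined with the additivity of $\zeta_{\mu_{\mathrm{GS}}}(-;t)$ coming from Proposition \ref{prop:GS}, gives
\[
\zeta_{\mu_{\mathrm{GS}}}(U;t) \;=\; \zeta_{\mu_{\mathrm{GS}}}(X;t)\;-_W\;\zeta_{\mu_{\mathrm{GS}}}(Z;t),
\]
and rationality follows because $W_{\mathrm{rat}}$ is a subring of $W$ by Naumann.

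The main obstacle is the substantive geometric content in part (i), which we extract as a black box from \cite[\S5.1]{Mazza}: linking the Mumford-type obstruction on $X$ to a concrete failure of Kimura-finiteness for $M^c(U)$ requires a delicate symmetric-power computation that has no analogue at the level of $K_0\mathrm{Var}(k)$. Part (ii), by contrast, is a formal consequence of Proposition \ref{prop:GS}, Guletskii's rationality theorem for surfaces with $q=0$, and the subring property of $W_{\mathrm{rat}}$.
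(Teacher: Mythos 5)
Your proposal diverges from the paper's proof in both parts, and each divergence contains a gap.

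For part (i), the cited result of O'Sullivan--Mazza (\cite[Thm.~5.18]{Mazza}) establishes that the motive \emph{without compact supports} $M(U)$ is not Kimura-finite; it says nothing directly about $M^c(U)$, which is the object in the proposition. The paper bridges this gap explicitly: since $U$ is a smooth surface, $M^c(U)\simeq M(U)^\ast(2)[4]$ by \cite[Thm.~4.3.7]{Voevodsky}; the operation $-(2)[4]$ is an auto-equivalence of $\mathrm{DM}_{\mathrm{gm}}(k)_\bbQ$; and by Deligne \cite[Prop.~1.18]{Deligne}, $M(U)^\ast$ is Kimura-finite iff $M(U)$ is. Your write-up silently replaces $M(U)$ by $M^c(U)$ in the citation, which is not what Mazza proves.

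For part (ii), the step where you invoke ``Guletskii's rationality theorem for surfaces with $q=0$'' to conclude that $\zeta_{\mu_{\mathrm{GS}}}(X;t)$ is rational does not hold up: there is no such general theorem. For a smooth projective surface with $q=0$ and $p_g>0$, the rationality of the Gillet--Soul\'e zeta function is essentially equivalent to Schur- or Kimura-type finiteness of the transcendental part of the motive, which is open in this generality. What Guletskii actually shows (\cite[Ex.~5]{Guletskii}, as cited in the paper) is the rationality of $\sigma_t([M(U)])$ \emph{for this particular open surface $U$}, via an explicit analysis of its motive; it is a bespoke computation, not a general theorem about projective surfaces. The paper then converts this into rationality of $\sigma_t([M^c(U)])$ via Lemma~\ref{lem:aux-11} (the twist $\sigma_t([M^c(X)])=\sigma_{\mu_{\mathrm{GS}}({\bf L})^dt}([M(X)])$) and identifies $\zeta_{\mu_{\mathrm{GS}}}(U;t)$ with $\sigma_t([M^c(U)])$. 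Your scissor-relation formula $\zeta_{\mu_{\mathrm{GS}}}(U;t)=\zeta_{\mu_{\mathrm{GS}}}(X;t)-_W\zeta_{\mu_{\mathrm{GS}}}(Z;t)$ and the computation $\zeta_{\mu_{\mathrm{GS}}}(Z;t)=(1-t)^{-2}$ are both fine, but they run in the wrong direction: a posteriori they show that rationality for $U$ and for $X$ are equivalent given the (easy) rationality for $Z$, but you cannot use this as input without first establishing rationality for one of $U$ or $X$, and the literature supplies it only for $U$.
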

\begin{proof}
As proved by O'Sullivan-Mazza in \cite[Thm.~5.18]{Mazza}, $M(U)$ is not Kimura-finite. Since the surface $U$ is smooth, we have $M^c(U)\simeq M(U)^\ast(2)[4]$; see \cite[Thm.~4.3.7]{Voevodsky}. Using the fact that $-(2)[4]$ is an auto-equivalence of the category $\mathrm{DM}_{\mathrm{gm}}(k)_\bbQ$ and that $M(U)^\ast$ is Kimura-finite if and only if $M(U)$ is Kimura-finite (see Deligne \cite[Prop.~1.18]{Deligne}), we conclude that $M^c(U)$ also is not~Kimura-finite. 

We now prove item (ii). As proved by Guletskii in \cite[\S3]{Guletskii}, the category $\mathrm{DM}_{\mathrm{gm}}(k)_\bbQ$ satisfies the conditions of Remark \ref{rk:2}. Consequently, we have a ring homomorphism
\begin{equation}\label{eq:Lambda-1}
\sigma_t: K_0(\mathrm{DM}_{\mathrm{gm}}(k)_\bbQ) \too W(K_0(\mathrm{DM}_{\mathrm{gm}}(k)_\bbQ))\,.
\end{equation}
As explained by Guletskii in \cite[Ex. 5]{Guletskii}, $\sigma_t([M(U)])$ is a rational function. Thanks to Lemma \ref{lem:aux-11} below, we hence conclude that $\sigma_t([M^c(U)])$ is also a rational function. The proof follows now from the fact that $\zeta_{\mu_{\mathrm{GS}}}(-;t)$ agrees with the composition of the ring homomorphisms \eqref{eq:measure-mix} and \eqref{eq:Lambda-1}.
\end{proof}
\begin{lemma}\label{lem:aux-11}
Given a smooth variety $X$ of dimension $d$, we have the equality
$$\sigma_t([M^c(X)])= \sigma_{\mu_{\mathrm{GS}}({\bf L})^dt}([M(X)])\,.$$
\end{lemma}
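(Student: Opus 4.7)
The plan is to reduce the identity to the fact that the Tate motive $\bbQ(1)[2]$ is a line element of the $\lambda$-ring $K_0(\mathrm{DM}_{\mathrm{gm}}(k)_\bbQ)$, after which the formula becomes a purely formal change of variables in the Kapranov power series.

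First I would invoke Voevodsky's duality \cite[Thm.~4.3.7]{Voevodsky} for the smooth variety $X$ of dimension $d$ (the same formula already exploited in the proof of Proposition~\ref{prop:others}): there is an isomorphism $M^c(X) \simeq M(X)^{\vee}(d)[2d]$ in $\mathrm{DM}_{\mathrm{gm}}(k)_\bbQ$. Passing to the Grothendieck ring, and using the fact that under Bondarko's identification $K_0(\mathrm{Chow}(k)_\bbQ) \simeq K_0(\mathrm{DM}_{\mathrm{gm}}(k)_\bbQ)$ the class $[\bbQ(d)[2d]]$ corresponds to $\mu_{\mathrm{GS}}(\mathbf{L})^d$, this yields a relation of the form $[M^c(X)] = [M(X)] \cdot \mu_{\mathrm{GS}}(\mathbf{L})^d$.

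The next step is to establish the line-element property of $\mu_{\mathrm{GS}}(\mathbf{L}) = [\bbQ(1)[2]]$ for the $\lambda$-structure on $K_0(\mathrm{DM}_{\mathrm{gm}}(k)_\bbQ)$ coming from Remark~\ref{rk:2}. Since the cohomological shift $[2]$ is even, the Koszul braiding on the tensor square of $\bbQ(1)[2]$ is trivial, so the $S_n$-action on $\bbQ(1)[2]^{\otimes n}$ is trivial, giving $\mathrm{Sym}^n(\bbQ(d)[2d]) \simeq \bbQ(nd)[2nd]$. More generally, the projection formula $\mathrm{Sym}^n(A \otimes \bbQ(d)[2d]) \simeq \mathrm{Sym}^n(A) \otimes \bbQ(nd)[2nd]$ holds for every geometric motive $A$. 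Translated to classes via Guletskii's construction, this yields
\[
\sigma^n\bigl(a \cdot \mu_{\mathrm{GS}}(\mathbf{L})^d\bigr) \; = \; \mu_{\mathrm{GS}}(\mathbf{L})^{nd} \cdot \sigma^n(a)
\]
for every $a \in K_0(\mathrm{DM}_{\mathrm{gm}}(k)_\bbQ)$, which packages coefficientwise into the single power series identity $\sigma_t(a \cdot \mu_{\mathrm{GS}}(\mathbf{L})^d) = \sigma_{\mu_{\mathrm{GS}}(\mathbf{L})^d\, t}(a)$. Plugging $a = [M(X)]$ and using the first paragraph then delivers the claimed equality.

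The main obstacle is the first paragraph. One must carefully track the Bondarko ring isomorphism and the interaction of Voevodsky's duality with it, in order to justify the clean relation $[M^c(X)] = [M(X)] \cdot \mu_{\mathrm{GS}}(\mathbf{L})^d$ (in particular, absorbing $M(X)^{\vee}$ into $[M(X)]$ at the level of $K_0$). Once this identification is in place, the remainder of the argument is a purely formal application of the line-element property of the Tate class, via the Teichmüller-type rule $g(t) \ast [\ell] = g(\ell t)$ that governs products with line elements in $W(R)$.
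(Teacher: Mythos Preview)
Your overall strategy matches the paper's: both start from Voevodsky's duality $M^c(X)\simeq M(X)^\vee(d)[2d]$, separate off the Tate twist, and then use the Teichm\"uller rule $g(t)\ast[\ell]=g(\ell t)$ (the paper packages this as Remark~\ref{rk:last}) to perform the change of variables $t\mapsto\mu_{\mathrm{GS}}(\mathbf{L})^d t$. Your line-element computation of $\sigma^n(a\cdot\mu_{\mathrm{GS}}(\mathbf{L})^d)$ is just a coefficientwise rephrasing of the paper's use of the ring homomorphism property of $\sigma_t$ together with $\sigma_t(\mu_{\mathrm{GS}}(\mathbf{L})^d)=\zeta_{\mu_{\mathrm{GS}}}(\mathbf{L}^d;t)=[\mu_{\mathrm{GS}}(\mathbf{L})^d]$, so that part is fine.

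The genuine gap is exactly the one you flag as ``the main obstacle'': you cannot absorb $M(X)^\vee$ into $[M(X)]$ at the level of $K_0$. The duality involution $[A]\mapsto[A^\vee]$ on $K_0(\mathrm{DM}_{\mathrm{gm}}(k)_\bbQ)$ is \emph{not} the identity (already $[\bbQ(1)^\vee]=[\bbQ(-1)]\neq[\bbQ(1)]$), so the relation $[M^c(X)]=[M(X)]\cdot\mu_{\mathrm{GS}}(\mathbf{L})^d$ that you assert in the first paragraph is false as stated; Voevodsky's theorem only yields $[M^c(X)]=[M(X)^\vee]\cdot\mu_{\mathrm{GS}}(\mathbf{L})^d$. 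The paper does not attempt this collapse in $K_0$. Instead it keeps the dual, applies the ring homomorphism $\sigma_t$ to the product $[M(X)^\ast]\cdot\mu_{\mathrm{GS}}(\mathbf{L}^d)$, and then passes from $\sigma_t([M(X)^\ast])$ to $\sigma_t([M(X)])$ by invoking \cite[Lem.~1.18]{Deligne}; i.e.\ the dual is handled at the level of the power series $\sigma_t$, not at the level of Grothendieck classes. That Deligne step is precisely the ingredient missing from your outline; if you insert it in place of the attempted $K_0$-level collapse, your argument becomes the paper's.
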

\begin{proof}
The proof is given by the following identifications
\begin{eqnarray}
\sigma_t([M^c(X)]) & = & \sigma_t([M(X)^\ast(d)[2d]]) \label{eq:star0} \\
& = & \sigma_t([M(X)^\ast]\mu_{\mathrm{GS}}({\bf L}^d)) \nonumber \\
& = & \sigma_t([M(X)^\ast])\ast \zeta_{\mu_{\mathrm{GS}}}({\bf L}^d;t) \nonumber \\
& = & \sigma_t([M(X)])\ast \zeta_{\mu_{\mathrm{GS}}}({\bf L}^d;t) \label{eq:star1} \\
& = & \sigma_{\mu_{\mathrm{GS}}({\bf L})^dt}([M(X)])\,, \label{eq:star2}
\end{eqnarray}
where \eqref{eq:star0} follows from \cite[Thm.~4.3.7]{Voevodsky}, \eqref{eq:star1} from \cite[Lem.~1.18]{Deligne}, and \eqref{eq:star2} from Remark \ref{rk:last} below with $\mu:=\mu_{\mathrm{GS}}$ and $g(t):=\sigma_t([M(X)])$.  
\end{proof}
\begin{example}\label{ex:3}
Let $U_1,U_2$ be two surfaces as in Proposition \ref{prop:others}. Thanks to the above Corollary \ref{cor:GS}, we hence conclude that $\zeta_{\mu_{\mathrm{GS}}}(U_1\times U_2;t)$ is a rational function.
\end{example}
\begin{remark}
Thanks to Corollary \ref{cor:main}(ii), the above Examples \ref{ex:1}, \ref{ex:2}, and \ref{ex:3}, hold {\em mutatis mutandis} for any motivic measure which factors through~$\mu_{\mathrm{GS}}$. 
\end{remark}
\section{Application II: Totaro's result}
The following result plays a central role in the study of the zeta~functions.
\begin{proposition}[Totaro]\label{prop:Totaro}
The equality $\zeta_\mu(X\times \bbA^n;t) = \zeta_\mu(X; \mu({\bf L})^nt)$ holds for every variety $X$ and motivic measure $\mu$.
\end{proposition}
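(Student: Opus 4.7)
The plan is to reduce the proposition to the following identity in the Grothendieck ring of varieties:
\[
[S^m(X\times\bbA^1)] = {\bf L}^m \cdot [S^m(X)] \qquad \text{for every } m\geq 0, \qquad (\star)
\]
and then obtain the general case by iteration. Once $(\star)$ is established, applying the motivic measure $\mu$ and summing over $m$ immediately yields
\[
\zeta_\mu(X\times\bbA^1;t) = \sum_{m\geq 0}\mu({\bf L})^m\, \mu([S^m(X)])\, t^m = \zeta_\mu(X;\mu({\bf L})t).
\]
The general case $n \geq 1$ then follows by induction on $n$ using the factorization $X \times \bbA^n = (X \times \bbA^{n-1}) \times \bbA^1$: applying the $n=1$ case to $Y := X \times \bbA^{n-1}$ and then the inductive hypothesis to $X$ with dummy variable $\mu({\bf L}) t$ gives
\[
\zeta_\mu(X \times \bbA^n; t) = \zeta_\mu(X \times \bbA^{n-1}; \mu({\bf L}) t) = \zeta_\mu(X; \mu({\bf L})^n t).
\]

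To prove $(\star)$, the idea is to stratify $S^m(X)$ by coincidence type. For each partition $\lambda=(1^{a_1}2^{a_2}\cdots) \vdash m$, set $r:=\sum_i a_i$ and let $S^m(X)^\circ_\lambda \subset S^m(X)$ be the locally closed stratum parametrizing divisors $\sum_j \lambda_j [P_j]$ with pairwise distinct points $P_j$ and with the prescribed multiplicity pattern. Over this stratum, the natural projection $\pi_m\colon S^m(X\times\bbA^1)\to S^m(X)$ has fiber isomorphic to
\[
\prod_j S^{\lambda_j}(\bbA^1) \simeq \prod_j \bbA^{\lambda_j} = \bbA^m,
\]
using the classical isomorphism $S^i(\bbA^1)\simeq\bbA^i$ via elementary symmetric polynomials. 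If one can upgrade this pointwise computation to the $K_0$-identity $[\pi_m^{-1}(S^m(X)^\circ_\lambda)] = {\bf L}^m \cdot [S^m(X)^\circ_\lambda]$, then summing over $\lambda \vdash m$ and using the partition of $S^m(X)$ into the $S^m(X)^\circ_\lambda$ immediately yields $(\star)$.

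The hard part will be this $K_0$-identity on each stratum. The key point is that over $S^m(X)^\circ_\lambda$ the morphism $\pi_m$ is a genuine rank-$m$ vector bundle, arising as the associated bundle to the \'etale $G_\lambda:=\prod_i S_{a_i}$-torsor $X^{(r)} \to X^{(r)}/G_\lambda = S^m(X)^\circ_\lambda$ (where $X^{(r)}\subset X^r$ denotes the configuration space of ordered $r$-tuples of pairwise distinct points, with $G_\lambda$ acting by permutation within each multiplicity class) and the linear permutation $G_\lambda$-representation $\prod_i (\bbA^i)^{a_i}$. Since vector bundles are Zariski-locally trivial, the required identity follows from standard scissor relations in $K_0 \mathrm{Var}(k)$.
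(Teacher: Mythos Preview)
Your argument is correct and is precisely the classical stratification proof the paper cites (G\"ottsche \cite[Lem.~4.4]{Goet}, Musta\c{t}\v{a} \cite[Prop.~7.32]{mustata}) rather than the proof the paper actually gives. The paper's own argument is restricted to motivic measures $\mu$ that can be exponentiated, and in that case it avoids symmetric products entirely: one uses that $\zeta_\mu(-;t)$ is a ring homomorphism into $W(R)$, computes $\zeta_\mu(\bbA^n;t)=(1-\mu({\bf L})t)^{-\ast n}=[\mu({\bf L})^n]$ as a Teichm\"uller element, and then invokes the identity $g(t)\ast[a]=g(at)$ to conclude $\zeta_\mu(X\times\bbA^n;t)=\zeta_\mu(X;t)\ast[\mu({\bf L})^n]=\zeta_\mu(X;\mu({\bf L})^n t)$. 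So the two approaches trade off: yours proves the full statement (valid for \emph{every} $\mu$) at the cost of the geometric work on strata, associated bundles, and the speciality of $GL_m$; the paper's proof is a two-line computation in $W(R)$ but only applies under the exponentiation hypothesis. The paper is explicit that its contribution here is this elementary alternative in the exponentiable case, not a new proof of the general result.
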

Its proof (see \cite[Lem.~4.4]{Goet}\cite[Prop.~7.32]{mustata}) is non-trivial and based on a stratification of the symmetric products of $X \times \bbA^n$. In all the cases where the motivic measure $\mu$ can be exponentiated, this result admits the following elementary proof:
\begin{proof}
Since $[X\times \bbA^n]=[X][\bbA^n]$ in the Grothendieck ring of varieties and the motivic measure $\mu$ can be exponentiated, the proof is given by the identifications
\begin{eqnarray}
\zeta_\mu(X\times \bbA^n;t) & = & \zeta_\mu(X;t) \ast \zeta_\mu({\bf L}^n;t) \nonumber\\
& = &  \zeta_\mu(X;t) \ast \zeta_\mu({\bf L};t)^{\ast n} \nonumber \\
 & = & \zeta_\mu(X;t) \ast (1 + \mu({\bf L})t + \mu({\bf L})t^2+ \cdots )^{\ast n} \label{eq:equality-1}\\
 & = & \zeta_\mu(X;t) \ast ((1-\mu({\bf L})t)^{-1})^{\ast n}\nonumber\\
 & = & \zeta_\mu(X;t) \ast [\mu({\bf L})]^{\ast n} \nonumber\\
& = & \zeta_\mu(X;t) \ast [\mu({\bf L})^n] \nonumber\\
& = & \zeta_\mu(X;\mu({\bf L})^nt)\,,\nonumber
\end{eqnarray}
where \eqref{eq:equality-1} follows from \cite[Ex.~7.23]{mustata} and $[\mu({\bf L})]$ stands for the image of $\mu({\bf L}) \in R$ under the multiplicative Teichm\"uller map $R \to W(R)$.
\end{proof}
\begin{remark}\label{rk:last}
The above proof shows more generally that $g(t) \ast \zeta_\mu({\bf L}^n;t)=g(\mu({\bf L})^nt)$ for every $g(t) \in W(R)$ and motivic measure $\mu$ which can be exponentiated.
\end{remark}
\begin{remark}{(Fiber bundles)}
Given a fiber bundle $E\to X$ of rank $n$, we have $[E]=[X][\bbA^n]$ in the Grothendieck ring of varieties; see \cite[Prop.~7.4]{mustata}. Therefore, the above proof, with $X$ replaced by $E$, shows  that $\zeta_\mu(E;t) = \zeta_\mu(X;\mu({\bf L})^nt)$.
\end{remark}
\begin{remark}{($\bbP^n$-bundles)}
Given a $\bbP^n$-bundle $E \to X$, we have $[E]=[X][\bbP^n]$ in the Grothendieck ring of varieties; see \cite[Ex.~7.5]{mustata}. Therefore, by combining the equality $[\bbP^n]=1+ {\bf L} + \cdots + {\bf L}^n$ with the above proof, we conclude that 
$$\zeta_\mu(E;t) = \zeta_\mu(X;t) +_W \zeta_\mu(X;\mu({\bf L})t) +_W \cdots +_W \zeta_\mu(X;\mu({\bf L})^nt)\,.$$
\end{remark}
\section{$G$-varieties}
Let $G$ be a finite group and $\mathrm{Var}^G(k)$ the category of {\em $G$-varieties}, \ie varieties $X$ equipped with a $G$-action $\lambda: G \times X \to X$ such that every orbit is contained in an affine open set. The {\em Grothendieck ring of $G$-varieties} $K_0\mathrm{Var}^G(k)$ is defined as the quotient of the free abelian group on the set of isomorphism classes of $G$-varieties $[X,\lambda]$ by the relations $[X,\lambda]=[Y,\tau]+[X\backslash Y,\lambda]$, where $(Y,\tau)$ is a closed $G$-invariant subvariety of $(X,\lambda)$. The multiplication is induced by the product of varieties. A motivic measure is a ring homomorphism $\mu^G: K_0\mathrm{Var}^G(k) \to R$. As mentioned in \cite[\S5]{Looijenga}, the above measures $\chi_c, \mu_{\mathrm{H}}, \mu_{\mathrm{P}}$ admit $G$-extensions $\chi_c^G, \mu_{\mathrm{H}}^G, \mu_{\mathrm{P}}^G$.
\begin{notation}
Let $\mathrm{Chow}^G(k)_\bbQ$ be the category of functors from the group $G$ (considered as a category with a single object) to the category $\mathrm{Chow}(k)_\bbQ$.
\end{notation}
Note that $\mathrm{Chow}^G(k)_\bbQ$ is still a $\bbQ$-linear additive idempotent complete symmetric monoidal category and that \eqref{eq:functor-h} extends to a symmetric monoidal functor
\begin{equation}\label{eq:functor-G}
\mathfrak{h}^G:\mathrm{SmProj}^G(k) \too \mathrm{Chow}^G(k)_\bbQ\,.
\end{equation}
Note also that the $n^{\mathrm{th}}$ symmetric product of a $G$-variety is still a $G$-variety. Therefore, the notion of exponentiation makes sense in this generality. Gillet-Soul{\'e}'s motivic measure $\mu_{\mathrm{GS}}$ admits the following $G$-extension:
\begin{proposition}\label{prop:new}
The above functor \eqref{eq:functor-G} gives rise to a motivic measure
$$ \mu_{\mathrm{GS}}^G: K_0\mathrm{Var}^G(k) \too K_0(\mathrm{Chow}^G(k)_\bbQ)$$
which can be exponentiated.
\end{proposition}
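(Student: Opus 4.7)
The plan is to mimic the proof of Proposition~\ref{prop:GS} equivariantly, deducing the result from Proposition~\ref{prop:expo} with $R := K_0(\mathrm{Chow}^G(k)_\bbQ)$.

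First, I would construct the motivic measure $\mu_{\mathrm{GS}}^G$ itself. On smooth projective $G$-varieties the assignment $[X,\lambda]\mapsto [\mathfrak{h}^G(X,\lambda)]$ is well-defined from \eqref{eq:functor-G}. To extend it additively to all of $K_0\mathrm{Var}^G(k)$ while respecting the cut-and-paste relations, I would invoke the $G$-equivariant analog of Gillet--Soul\'e's construction: in characteristic zero this is furnished by equivariant resolution of singularities together with equivariant weak factorization, which produce an equivariant Bittner-style presentation of $K_0\mathrm{Var}^G(k)$ in terms of smooth projective $G$-varieties modulo equivariant blow-up relations; the latter are matched in $K_0(\mathrm{Chow}^G(k)_\bbQ)$ by the classical projective-bundle/blow-up formula for Chow motives, which is natural in $G$-actions. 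In positive characteristic the same outcome is reached through an equivariant weight-complex functor. Multiplicativity of $\mu_{\mathrm{GS}}^G$ is automatic from the symmetric monoidality of $\mathfrak{h}^G$.

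Second, I verify the two hypotheses of Proposition~\ref{prop:expo}. Condition~(i) is immediate from Remark~\ref{rk:1} applied to $\cC = \mathrm{Chow}^G(k)_\bbQ$, which the paper explicitly records as $\bbQ$-linear, additive, idempotent complete, and symmetric monoidal. Condition~(ii) asks that $\mu_{\mathrm{GS}}^G([S^n(X,\lambda)]) = \sigma^n(\mu_{\mathrm{GS}}^G([X,\lambda]))$ for every quasi-projective $G$-variety; by the equivariant version of Remark~\ref{rk:3} (same equivariant Bittner presentation), it suffices to check this on smooth projective $G$-varieties. For such $(X,\lambda)$, the required isomorphism $\mathfrak{h}^G(S^n(X,\lambda)) \simeq \mathrm{Sym}^n \mathfrak{h}^G(X,\lambda)$ in $\mathrm{Chow}^G(k)_\bbQ$ is obtained by upgrading del Ba\~no--Aznar's argument: they realize both sides as the image of the symmetrizer idempotent $\tfrac{1}{n!}\sum_{\sigma\in S_n}\sigma$ acting on $\mathfrak{h}(X)^{\otimes n}$, and this idempotent commutes with the diagonal $G$-action, so the isomorphism lifts to the category $\mathrm{Chow}^G(k)_\bbQ$ of $G$-equivariant Chow motives. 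Proposition~\ref{prop:expo} then yields that $\mu_{\mathrm{GS}}^G$ can be exponentiated.

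The principal obstacle is the first step: establishing an equivariant analog of the Gillet--Soul\'e / Bittner--Heinloth presentation in order to define $\mu_{\mathrm{GS}}^G$ and to reduce condition~(ii) to smooth projective $G$-varieties. Once that equivariant presentation is secured, the rest of the argument transports essentially verbatim from the non-equivariant setting, since Remark~\ref{rk:1} and del Ba\~no--Aznar's construction are both manifestly compatible with a $G$-action.
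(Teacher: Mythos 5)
Your proof takes essentially the same route as the paper's: define $\mu_{\mathrm{GS}}^G$ via the equivariant Bittner--Heinloth presentation together with the naturality of the blow-up formula for Chow motives, then apply Proposition~\ref{prop:expo} using Remark~\ref{rk:1} for condition~(i) and an equivariant upgrade of del Ba\~no--Aznar's isomorphism for condition~(ii). The one step you flag as a ``principal obstacle'' is not in fact a gap: the equivariant presentation of $K_0\mathrm{Var}^G(k)$ in terms of smooth projective $G$-varieties modulo blow-up relations is already established in the literature, and the paper simply cites it as \cite[Lem.~7.1]{Bittner}.
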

\begin{proof}
Given a smooth projective variety $X$ and a closed subvariety $Y$, let us denote by $\mathrm{Bl}_Y(X)$ the blow-up of $X$ along $Y$ and by $E$ the associated exceptional divisor. As proved by Manin in \cite[\S9]{Manin}, we have a natural isomorphism $\mathfrak{h}(\mathrm{Bl}_Y(X))\oplus \mathfrak{h}(Y) \simeq \mathfrak{h}(X)\oplus \mathfrak{h}(E)$ in $\mathrm{Chow}(k)_\bbQ$. Since this isomorphism is natural, it also holds in $\mathrm{Chow}^G(k)_\bbQ$ when $X$ is replaced by a smooth projective $G$-variety $(X,\lambda)$ and $Y$ by a closed $G$-invariant subvariety $(Y,\tau)$. Therefore, thanks to Heinloth's presentation of the Grothendieck ring of $G$-varieties in terms of smooth projective $G$-varieties (see \cite[Lem.~7.1]{Bittner}), the assignment $X \mapsto \mathfrak{h}^G(X)$ gives rise to a (unique) motivic measure $\mu_{\mathrm{GS}}^G$. The proof of Proposition \ref{prop:GS}, with \eqref{eq:functor-h} replaced by \eqref{eq:functor-G}, shows that this motivic measure $\mu^G_{\mathrm{GS}}$ can be exponentiated. 
\end{proof}
\begin{remark}
Thanks to Corollary \ref{cor:main}(ii), all the motivic measures which factor through $\mu^G_{\mathrm{GS}}$ (\eg\ $\chi_c^G, \mu_{\mathrm{H}}^G, \mu^G_{\mathrm{P}}$) can also be exponentiated.
\end{remark}
Proposition \ref{prop:product} admits the following $G$-extension:
\begin{proposition}\label{prop:GS1}
Let $\mu^G$ be a motivic measure which can be exponentiated and $(X,\lambda), (Y,\tau)$ two $G$-varieties. If $\zeta_{\mu^G}((X,\lambda);t)$ and $\zeta_{\mu^G}((Y,\tau);t)$ are rational functions, then $\zeta_{\mu^G}((X\times Y, \lambda \times \tau);t)$ is also a rational function.
\end{proposition}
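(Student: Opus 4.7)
The plan is to mimic the proof of Proposition \ref{prop:product} in the $G$-equivariant setting, since all the necessary ingredients carry over verbatim. First I would observe that the construction of Kapranov's zeta function \eqref{eq:zeta} makes sense in the $G$-equivariant setting: given a $G$-variety $(X,\lambda)$, the symmetric products $S^n(X)$ inherit a natural $G$-action, and so the assignment $(X,\lambda) \mapsto \zeta_{\mu^G}((X,\lambda);t) := \sum_n \mu^G([S^n(X,\lambda)])t^n$ is defined. The same argument as in Musta\c{t}\v{a} \cite[Prop.~7.28]{mustata}, applied to the Grothendieck ring of $G$-varieties (whose presentation in terms of smooth projective $G$-varieties is provided by Heinloth \cite[Lem.~7.1]{Bittner}), shows that $\zeta_{\mu^G}(-;t)$ defines a group homomorphism $K_0\mathrm{Var}^G(k) \to W(R)$.

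Next, since $\mu^G$ can be exponentiated by hypothesis, this group homomorphism is in fact a \emph{ring} homomorphism. In particular, the multiplicativity identity
\[
\zeta_{\mu^G}((X \times Y, \lambda \times \tau);t) = \zeta_{\mu^G}((X,\lambda);t) \ast \zeta_{\mu^G}((Y,\tau);t)
\]
holds in $W(R)$.

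Finally, I would invoke Naumann's result \cite[Prop.~6]{Naumann} that $W_{\mathrm{rat}}(R)$ is a subring of $W(R)$. By assumption, both factors on the right-hand side belong to $W_{\mathrm{rat}}(R)$, hence so does the product, which is precisely $\zeta_{\mu^G}((X\times Y,\lambda\times\tau);t)$. This concludes the proof.

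There is no real obstacle here: the whole content is packaged in the existing framework. The only point requiring a brief justification is that exponentiation in the $G$-equivariant setting has the same formal consequences as in the non-equivariant setting, namely that $\zeta_{\mu^G}(-;t)$ lands in $W(R)$ as a ring homomorphism. This is already guaranteed by Definition \ref{def:expo} applied verbatim to $\mu^G$ (with $\mathrm{Var}(k)$ replaced by $\mathrm{Var}^G(k)$ throughout), so the proof essentially reduces to a one-line invocation of Naumann's subring property, exactly as in Proposition \ref{prop:product}.
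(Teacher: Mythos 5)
Your proof is correct and is exactly the argument the paper has in mind: the paper states Proposition~\ref{prop:GS1} without a separate proof, presenting it as the $G$-extension of Proposition~\ref{prop:product}, whose proof is precisely the one-line invocation of Naumann's subring property of $W_{\mathrm{rat}}(R)$ applied to the product identity $\zeta_{\mu^G}((X\times Y, \lambda\times\tau);t) = \zeta_{\mu^G}((X,\lambda);t) \ast \zeta_{\mu^G}((Y,\tau);t)$ supplied by exponentiation. (One small inaccuracy: the group-homomorphism property of $\zeta_{\mu^G}(-;t)$ follows from additivity of symmetric products over closed decompositions as in \cite[Prop.~7.28]{mustata}, not from Heinloth's smooth-projective presentation \cite[Lem.~7.1]{Bittner}, which the paper uses elsewhere to construct $\mu_{\mathrm{GS}}^G$.)
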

\begin{example}
Assume that the group $G$ (of order $r$) is abelian and that the base field $k$ is algebraically closed of characteristic zero or of positive characteristic $p$ with $p \nmid r$. Under these assumptions, Mazur proved in \cite[Thm.~1.1]{Mazur} that $\zeta_{\mu^G}((C,\lambda);t)$ is a rational function for every smooth projective $G$-curve $(C,\lambda)$ and motivic measure $\mu^G$. Thanks to Proposition \ref{prop:GS1}, we hence conclude that $\zeta_{\mu^G}((C_1\times C_2,\lambda_1 \times \lambda_2);t)$ is still a rational function for every motivic measure $\mu^G$ which can be exponentiated and for any two smooth projective $G$-curves $(C_1,\lambda_1)$ and $(C_2,\lambda_2)$.
\end{example}
Finally, Totaro's result admits the following $G$-extension:
\begin{proposition}
Let $\mu^G$ be a motivic measure which can be exponentiated and $(X.\lambda), (\bbA^n,\tau)$ two $G$-varieties. When $G$ (of order $r$) is abelian and $k$ is algebraically closed, Kapranov's zeta function $\zeta_{\mu^G}((X\times \bbA^n,\lambda\times \tau);t)$ agrees with 
$$ \zeta_{\mu^G}\left((X,\lambda); \mu^G(S^r(\bbA^n,\tau))t\right)+_W \zeta_{\mu^G}((X,\lambda);t)\ast \left(\sum_{l=0}^{r-1}\prod_{i=1}^n \mu^G([\bbA^1,\tau_i]\cdots[\bbA^1,\tau_i^l])t^l\right)\,,$$
where $[\bbA^n,\tau]=[\bbA^1,\tau_1]\cdots[\bbA^1,\tau_n]$.
\end{proposition}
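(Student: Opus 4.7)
The approach closely mirrors the proof of Totaro's result given earlier in this paper, the new ingredient being a careful analysis of the $G$-action on the symmetric products of $\bbA^n$. The first step is to invoke the exponentiability hypothesis: since $\zeta_{\mu^G}$ is then a ring homomorphism $K_0\mathrm{Var}^G(k) \to W(R)$, the factorisation $[X \times \bbA^n, \lambda \times \tau] = [X, \lambda] \cdot [\bbA^n, \tau]$ in the Grothendieck ring gives
$$\zeta_{\mu^G}((X \times \bbA^n, \lambda \times \tau); t) = \zeta_{\mu^G}((X, \lambda); t) \ast \zeta_{\mu^G}((\bbA^n, \tau); t),$$
so everything reduces to analysing $\zeta_{\mu^G}((\bbA^n, \tau); t)$ and then recombining via the distributivity of $\ast$ over $+_W$.

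The hypotheses that $G$ is abelian and $k$ is algebraically closed diagonalise the $G$-action on $\bbA^n$, producing the decomposition $[\bbA^n, \tau] = [\bbA^1, \tau_1] \cdots [\bbA^1, \tau_n]$ quoted in the proposition. Consequently $\zeta_{\mu^G}((\bbA^n, \tau); t)$ is the $\ast$-product of the $n$ one-dimensional factors $\zeta_{\mu^G}((\bbA^1, \tau_i); t)$. To compute each such factor I take the elementary symmetric polynomials $e_1, \ldots, e_m$ as coordinates on $\bbA^m = S^m(\bbA^1)$; since $e_l$ is homogeneous of degree $l$, the character $\tau_i$ acts on $e_l$ via $\tau_i^l$, yielding an isomorphism $S^m(\bbA^1, \tau_i) \cong \prod_{l=1}^m (\bbA^1, \tau_i^l)$ of $G$-varieties and hence $\mu^G(S^m(\bbA^1, \tau_i)) = \prod_{l=1}^m \mu^G([\bbA^1, \tau_i^l])$. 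The periodicity $\tau_i^{l+r} = \tau_i^l$ (from $|G|=r$) then allows me to group terms by $m \bmod r$ and sum the resulting geometric series in $t^r$, producing a rational closed form for $\zeta_{\mu^G}((\bbA^1, \tau_i); t)$ whose denominator is $1 - \mu^G(S^r(\bbA^1, \tau_i)) t^r$ and whose numerator is the polynomial $\sum_{l=0}^{r-1} \mu^G(S^l(\bbA^1, \tau_i)) t^l$.

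The main obstacle is the next step: taking the $\ast$-product over $i$ of these $n$ rational Witt elements, whose denominators are Verschiebung images $V_r([\mu^G(S^r(\bbA^1, \tau_i))])$ of Teichm\"uller elements. Combining them requires the defining relation $(1-at)^{-1} \ast (1-bt)^{-1} = (1-abt)^{-1}$ together with the projection formula $y \ast V_r(x) = V_r(F_r(y) \ast x)$ relating the Frobenius and Verschiebung operators on $W(R)$. The outcome should be a decomposition
$$\zeta_{\mu^G}((\bbA^n, \tau); t) = \alpha(t) +_W \beta(t),$$
in which $\alpha(t)$ encodes the $r$-periodic contribution controlled by $\mu^G(S^r(\bbA^n, \tau))$ and $\beta(t) = \sum_{l=0}^{r-1} \prod_i \mu^G([\bbA^1, \tau_i] \cdots [\bbA^1, \tau_i^l]) t^l$ is exactly the polynomial appearing in the second summand of the claim.

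Finally, substituting this decomposition into the first display and distributing $\ast$ over $+_W$ splits $\zeta_{\mu^G}((X \times \bbA^n, \lambda \times \tau); t)$ into two pieces. The piece $\zeta_{\mu^G}((X, \lambda); t) \ast \beta(t)$ is already the second summand in the statement, while the Teichm\"uller identity $g(t) \ast [a] = g(at)$ (exactly as used in the non-equivariant Totaro argument on the previous page) converts $\zeta_{\mu^G}((X, \lambda); t) \ast \alpha(t)$ into the first summand $\zeta_{\mu^G}((X, \lambda); \mu^G(S^r(\bbA^n, \tau)) t)$. Tracking the interplay between the Verschiebung structure hidden inside $\alpha(t)$ and the Teichm\"uller identity is the step I expect to cost the most effort, since here the passage from a denominator in $t^r$ to a pure substitution in $t$ has to be justified with care.
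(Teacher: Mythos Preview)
Your global outline matches the paper's: factor $\zeta_{\mu^G}((X\times\bbA^n,\lambda\times\tau);t)=\zeta_{\mu^G}((X,\lambda);t)\ast\zeta_{\mu^G}((\bbA^n,\tau);t)$ via exponentiability, split $\zeta_{\mu^G}((\bbA^n,\tau);t)$ as a $+_W$ sum of two pieces, distribute $\ast$ over $+_W$, and finish with the Teichm\"uller identity $g(t)\ast[a]=g(at)$. Where you diverge from the paper is in how you obtain the decomposition of $\zeta_{\mu^G}((\bbA^n,\tau);t)$.

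The paper does not compute this zeta function at all: it simply quotes Mazur \cite[Page~1338]{Mazur}, who gives directly
\[
\zeta_{\mu^G}((\bbA^n,\tau);t)=\frac{1}{1-\mu^G(S^r(\bbA^n,\tau))\,t}\left(\sum_{l=0}^{r-1}\prod_{i=1}^n \mu^G([\bbA^1,\tau_i]\cdots[\bbA^1,\tau_i^l])\,t^l\right),
\]
i.e.\ an ordinary product of power series, hence a $+_W$ sum in $W(R)$ of the Teichm\"uller element $[\mu^G(S^r(\bbA^n,\tau))]$ and the displayed polynomial. The Teichm\"uller identity then applies immediately and the proof is over in one line. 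Mazur's computation works with $S^m(\bbA^n,\tau)$ directly for all $n$; it does not pass through the one-dimensional case.

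Your route---writing $\zeta_{\mu^G}((\bbA^n,\tau);t)$ as the $\ast$-product of the $n$ factors $\zeta_{\mu^G}((\bbA^1,\tau_i);t)$ and then recombining---is a genuine detour, and the obstacle you flag is real. Each one-dimensional factor has denominator $1-\mu^G(S^r(\bbA^1,\tau_i))\,t^r$, a Verschiebung image $V_r([\,\cdot\,])$ rather than a Teichm\"uller element. Your final step needs $g(t)\ast\alpha(t)=g(\mu^G(S^r(\bbA^n,\tau))\,t)$, which forces $\alpha(t)$ to be exactly $[\mu^G(S^r(\bbA^n,\tau))]=(1-\mu^G(S^r(\bbA^n,\tau))\,t)^{-1}$; but nothing in your $\ast$-product manipulation produces this Teichm\"uller element from the Verschiebung pieces, and the projection formula $y\ast V_r(x)=V_r(F_r(y)\ast x)$ you invoke moves in the wrong direction for that purpose. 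So the step you label as ``the most effort'' is not merely laborious---as written, your proposal does not close it. The paper avoids the issue entirely by importing Mazur's closed formula, in which the Teichm\"uller form is already present.
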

\begin{proof}
Since $[X\times \bbA^n,\lambda\times \tau]=[X,\lambda][\bbA^n,\tau]$ in the Grothendieck ring of $G$-varieties and the motivic measure $\mu^G$ can be exponentiated, we have the equality
\begin{equation*}\label{eq:equality-last-1}
\zeta_{\mu^G}((X\times \bbA^n,\lambda\times \tau);t) = \zeta_{\mu^G}((X,\lambda); t) \ast \zeta_{\mu^G}((\bbA^n,\tau); t)\,.
\end{equation*}
Moreover, as explained in \cite[Page~1338]{Mazur}, we have the following computation
\begin{equation*}\label{eq:equality-last-2}
\zeta_{\mu^G}((\bbA^n,\tau); t) = \frac{1}{1-\mu^G(S^r(\bbA^n,\tau))t}\left(\sum_{l=0}^{r-1}\prod_{i=1}^n \mu^G([\bbA^1,\tau_i]\cdots[\bbA^1,\tau_i^l])t^l\right)\,.
\end{equation*}
Therefore, since $(1-\mu^G(S^r(\bbA^n,\tau))t)^{-1}$ is the Teichm\"uller class $[\mu^G(S^r(\bbA^n,\tau))]$, the proof follows from the combination of the above equalities.
\end{proof}

\end{document}